\theoremstyle{plain}
\newtheorem{Thm}{Theorem}
\newtheorem*{Thm*}{Theorem}
\newtheorem{Lem}[Thm]{Lemma}
\newtheorem{Cor}[Thm]{Corollary}
\newtheorem{Prop}[Thm]{Proposition}
\newtheorem{Def}[Thm]{Definition}
\newtheorem{Rem}[Thm]{Remark}
\def\CP{\hbox{${\Bbb C} P^2$}}
\def\CPb{\hbox{$\overline{{\Bbb C}P^2}$}}
\def\v{\vskip.12in}
\begin{document}

\title[]{Exotic Stein fillings with arbitrary fundamental group}

\author[Akhmedov and Ozbagci]{Anar Akhmedov and Burak Ozbagci}

\address{School of Mathematics, University of Minnesota, Minneapolis, MN 55455, USA, \newline akhmedov@math.umn.edu}

\address{Department of Mathematics, Ko\c{c} University, Istanbul,
Turkey, \newline bozbagci@ku.edu.tr}



\begin{abstract}

For any finitely presentable group $G$, we show the existence of an
isolated complex surface singularity link which admits infinitely
many exotic Stein fillings such that the fundamental group of each
filling is isomorphic to $G$. We also provide an infinite family of
closed exotic smooth four-manifolds with the fundamental group $G$
such that each member of the family admits a non-holomorphic
Lefschetz fibration over the two-sphere.

\end{abstract}

\maketitle

\section{Introduction}

Although many examples of isolated complex surface singularity links
which admit only finitely many Stein (or symplectic) fillings have
appeared in the literature,  very few examples of singularity links
with infinitely many \emph{exotic} (i.e., homeomorphic but pairwise
non-diffeomorphic) Stein fillings are known. Until 2007 no exotic
Stein fillings were known and the first such examples were
constructed by Akhmedov et al. in \cite{aems}. This paper is a
sequel to \cite{ako} in which the authors have shown that, for each
$m \geq 1$, there exists a (Seifert fibered) singularity link which
admits infinitely many exotic Stein fillings of its canonical
contact structure such that the fundamental group of each filling is
$\mathbb{Z} \oplus \mathbb{Z}_m$, extending the result in
\cite{aems}. The main goal of this paper is to improve this result
by replacing the group $\mathbb{Z} \oplus \mathbb{Z}_m$ with an
arbitrary finitely presentable  group $G$. More precisely, we prove that \\

\begin{Thm} \label{main1} For any finitely presentable
group $G$, there exists an isolated complex surface singularity link
which admits infinitely many exotic Stein fillings of its canonical
contact structure such that the fundamental group of each filling is
isomorphic to $G$.
\end{Thm}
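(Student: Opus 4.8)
The plan is to realize $G$ as the fundamental group of a Stein filling by a careful handlebody construction, then produce an infinite family of exotic copies detected by a smooth invariant while preserving both the boundary contact manifold and the fundamental group.

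First, I would start from a finite presentation $G = \langle g_1, \dots, g_k \mid r_1, \dots, r_\ell \rangle$ and build a compact Stein domain $W_0$ whose fundamental group is $G$. The standard mechanism is to take a Stein domain built from a subcritical skeleton (attaching $1$-handles to $\natural_k(S^1 \times D^3)$ to get the free group $F_k$, realized Steinly as in Eliashberg/Gompf) and then attach Legendrian $2$-handles along knots representing the relators $r_1, \dots, r_\ell$. The Stein condition forces the framing to be $\tb - 1$, so I would need to stabilize the Legendrian relator-knots to make this framing compatible; since stabilization is always available and does not change the underlying smooth knot type or the induced presentation, this is manageable. This produces a Stein filling $W_0$ with $\pi_1(W_0) \cong G$ and some boundary contact $3$-manifold $(Y, \xi)$. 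To guarantee $(Y,\xi)$ is the canonical contact structure on an isolated complex surface singularity link, I would want $W_0$ (or a modification) to be a resolution-type filling, which suggests instead building $W_0$ as a plumbing/Seifert-type piece following the approach of \cite{ako} but with extra handles to inject $G$ — mirroring how \cite{ako} got $\mathbb{Z} \oplus \mathbb{Z}_m$, now made universal.

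Second, with $W_0$ in hand I would invoke the knot-surgery / fiber-sum machinery that powers \cite{aems} and \cite{ako}: embed a suitable torus or surface of square zero in a closed symplectic extension, and perform logarithmic transforms or Fintushel–Stern knot surgeries using a family of knots $K_n$ (e.g. twist knots) with distinct Alexander polynomials. Done relative to the boundary, this yields an infinite family $\{W_n\}$ of Stein fillings of the same $(Y, \xi)$. The key points to check are: (i) each $W_n$ is homeomorphic to $W_0$ — this follows if the surgery is done along a torus in a simply-connected region so that $\pi_1$, the intersection form, and the Kirby–Siebenmann invariant are unchanged, hence Freedman applies; (ii) the $W_n$ are pairwise non-diffeomorphic — detected by Seiberg–Witten invariants (or their relative/closed-up versions), which the knot-surgery formula $SW = \Delta_{K_n}(t) \cdot SW_{\text{old}}$ distinguishes; and (iii) each $W_n$ carries a Stein structure filling the fixed contact boundary — this is where one uses that knot surgery along a Lagrangian/symplectic torus can be performed symplectically and that the resulting symplectic fillings can be deformed to Stein (Stein = symplectic + handle decomposition argument), keeping $\pi_1 = G$ since the surgery torus lies in a simply connected piece.

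The main obstacle I expect is item (iii) combined with the boundary condition: arranging that all of this — the injection of an arbitrary $G$, the exotic knot-surgery family, AND the identification of the boundary with a singularity link's canonical contact structure — can be done \emph{simultaneously}. Concretely, the surgery region must be chosen disjoint from the $2$-handles carrying the relators of $G$ (so $\pi_1$ survives and stays exactly $G$, not a quotient), must contain a symplectic torus of the right framing for knot surgery, and the ambient filling must be of the special form (resolution of a singularity, or Seifert-fibered boundary) needed to apply the results quoted from \cite{ako}. I would handle this by building $W_0$ as a boundary connected sum (or appropriate gluing) of two pieces: a "singularity piece" $P$ taken verbatim from \cite{ako} that supplies both the canonical contact boundary and the embedded surgery torus, and a "group piece" $Q$ — a Stein handlebody with $\pi_1 = G$ and a single boundary component a Heegaard-surface-compatible contact structure — glued along a contact submanifold so that $\pi_1(W_0) = \pi_1(P) * \pi_1(Q)/(\cdots)$ collapses to exactly $G$ while leaving the torus in $P$ untouched; then run the knot surgery entirely inside $P$. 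Verifying that this gluing is Stein and that it does not disturb the Seiberg–Witten-theoretic distinguishing argument (e.g. ensuring the relevant basic classes are supported away from the connect-summing region, or using that $Q$ contributes trivially) is the delicate part, but it is exactly the kind of argument that \cite{aems,ako} established in their settings, so I would expect it to go through with care.
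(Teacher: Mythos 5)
Your overall strategy (realize $G$ in a Stein filling, then run Fintushel--Stern knot surgery with knots of distinct Alexander polynomials to get infinitely many smooth structures on fillings of a fixed contact boundary) is the right family of ideas, but the proposal has genuine gaps at exactly the points where the real work lies. First, the construction of $W_0$: a Legendrian handlebody presentation of $G$ gives a Stein domain whose boundary is an essentially uncontrolled contact $3$-manifold, and your proposed repair --- boundary-connect-summing (or vaguely ``gluing along a contact submanifold'') a group piece $Q$ with $\pi_1=G$ onto the singularity piece $P$ of \cite{ako} --- destroys precisely the property you need: the boundary becomes a nontrivial connected sum (or something uncontrolled), while links of isolated normal surface singularities are irreducible, and there is no reason the resulting contact structure would be the canonical one. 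The paper resolves this differently: it builds a \emph{closed} symplectic Lefschetz fibration $X_n(G)\to S^2$ with $\pi_1=G$ (Luttinger surgeries on $F'\times\mathbb{T}^2$ encoding the relators by embedded curves, then a symplectic sum with $E(n)$), equipped with many disjoint $(-2)$-sphere sections; the filling is the complement of a neighborhood of a regular fiber and sections, which is a PALF and hence Stein by \cite{ao1}, and whose boundary is automatically a Seifert fibered singularity link with its canonical contact structure, with $\pi_1$ of the filling equal to $G$ because the normal circles of the removed configuration are nullhomotopic.

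Second, two of your key steps do not go through as stated. (iii) ``symplectic fillings can be deformed to Stein'' is not a theorem, and if the surgery torus is placed in a simply connected region disjoint from the fibers and sections, no mechanism remains to prove the surged complement is Stein and fills the \emph{same} contact boundary. In the paper the torus is a multisection meeting each fiber of the Lefschetz fibration twice and disjoint from the sections; this is exactly what guarantees that after knot surgery with a genus $h$ fibered knot the Lefschetz fibration survives with fiber genus $2g+2h+n-1$ (the same for every $K_i$), so removing the sections and a fiber again yields a PALF, i.e.\ a Stein filling of the same singularity link. (i) You cannot ``apply Freedman'' to conclude the fillings are homeomorphic: they are manifolds with boundary and fundamental group $G$, which is arbitrary and need not be good. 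The paper instead produces a homeomorphism between the closed manifolds $X_n(G)_{K_i}$ and $X_n(G)$ supported in a nucleus/cusp neighborhood (using the decomposition $X_n(G)=N(n)\cup W(G,n,g)$ with Brieskorn homology sphere boundary), observes that the removed configuration meets this neighborhood only in two copies of $D^2\times D^2$, which knot surgery converts to $D^2\times\Sigma(h,1)$, and uses that diffeomorphisms of $\partial(D^2\times\Sigma(h,1))$ extend to cut the homeomorphism down to the fillings. Similarly, non-diffeomorphism of the fillings is deduced from the closed Seiberg--Witten invariants only after checking that boundary diffeomorphisms extend over the removed pieces. Without these ingredients the proposal does not yet constitute a proof.
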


We also prove that

\begin{Thm} \label{main2} For any finitely presentable
group $G$, there exists an infinite family of closed exotic
symplectic $4$-manifolds with the fundamental group $G$ such that
each member of this family admits a non-holomorphic Lefschetz
fibration over $S^2$.
\end{Thm}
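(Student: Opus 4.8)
The plan is to start from the exotic Stein fillings produced in Theorem~\ref{main1} and cap them off to obtain closed exotic symplectic $4$-manifolds, then arrange that the resulting closed manifolds carry Lefschetz fibrations over $S^2$. More precisely, the singularity link constructed for Theorem~\ref{main1} bounds, on its ``other side,'' a compact symplectic cap: one glues the Milnor fiber (or a suitable resolution piece) to each Stein filling $W_k$ along the contact boundary, using the fact that the canonical contact structure on a singularity link is Milnor fillable and that convex and concave boundaries of the same contact manifold can be glued symplectically. This yields an infinite family $\{X_k\}$ of closed symplectic $4$-manifolds, all with $\pi_1(X_k)\cong G$ (the cap is simply connected, or at least its gluing kills no extra generators, so van Kampen gives $\pi_1(X_k)\cong\pi_1(W_k)\cong G$), all homeomorphic to one another (same intersection form, same $\pi_1$, apply Freedman together with the classification of the relevant topological type), and pairwise non-diffeomorphic because they are distinguished by a diffeomorphism invariant — Seiberg--Witten invariants, or the fact that the $W_k$ already have distinct $SW$-type basic classes that survive the capping, exactly as in \cite{ako} and \cite{aems}.

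The second task is to equip each $X_k$ with a Lefschetz fibration over $S^2$. The natural route is to use the standard fact (Loi--Piergallini, Akbulut--Ozbagci) that a Stein filling admits a positive allowable Lefschetz fibration over $D^2$ with bounded fibers; the monodromy is a product of positive Dehn twists in the mapping class group of a surface with boundary. One then extends this open-book/Lefschetz structure across the symplectic cap, or alternatively uses that the cap itself is built from a Milnor fiber which fibers over $D^2$, so that the two fibrations match along the binding/boundary open book and assemble into a Lefschetz fibration of $X_k$ over $S^2$. Care is needed to ensure the monodromy factorizations on the two sides are compatible, i.e.\ that the open book supporting the contact structure on the link is the common boundary; this is where one invokes the compatibility of the Stein-to-Lefschetz construction with a prescribed open book.

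Finally, one must check that these Lefschetz fibrations are \emph{non-holomorphic}. The cleanest argument: if $X_k\to S^2$ were a holomorphic Lefschetz fibration, then $X_k$ would be a complex surface of general type (or at least a Kähler surface with the appropriate Kodaira dimension), and its homeomorphism type would be constrained — but an infinite family of pairwise non-diffeomorphic manifolds homeomorphic to a fixed complex surface cannot all be holomorphic Lefschetz fibrations, since complex surfaces in a fixed homeomorphism class of this kind admit only finitely many deformation types, or more directly, one member of the family has Seiberg--Witten invariants incompatible with being Kähler (wrong number of basic classes, or vanishing where a Kähler surface would have a nonzero value). Alternatively, invoke the result that a holomorphic Lefschetz fibration has a holomorphic section or satisfies a slope/signature inequality that one of the $X_k$ violates.

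\medskip

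The main obstacle I expect is the middle step: arranging the Lefschetz fibration on the closed manifold while simultaneously controlling $\pi_1$. Capping a Stein filling symplectically is routine, and the non-holomorphicity follows formally from the infinitude of the family once the fibration exists; but producing an honest Lefschetz fibration $X_k\to S^2$ — as opposed to just a symplectic structure — requires that the open book on the link arising from the Milnor fibration (the ``horizontal'' structure) be exactly the one compatible with the positive factorization of $W_k$'s Lefschetz fibration over $D^2$, and that capping does not disturb the $\pi_1$ computation. I would handle this by choosing the building blocks in Theorem~\ref{main1} from the start so that the relevant open book and the cap's fibration are visibly compatible — essentially by doing the fiber-sum / knot-surgery constructions of \cite{ako} in a fibered way — so that the Lefschetz fibration structure is present by construction rather than imposed afterwards.
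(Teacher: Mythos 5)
Your proposal runs the argument in the opposite direction from the paper, and in this paper that direction is not available: Theorem~\ref{main1} is \emph{deduced from} Theorem~\ref{main2} (the exotic Stein fillings are obtained by deleting a regular fiber and some $(-2)$-sphere sections from the closed exotic Lefschetz fibrations), so starting from Theorem~\ref{main1} and capping off is circular unless you supply an independent construction of the fillings. Even granting the fillings, the capping step has several genuine gaps. First, the Milnor fiber is a \emph{convex} (Stein) filling of the link, not a concave cap, so ``Milnor fiber $\cup$ Stein filling'' is not a symplectic closed-up manifold; the concave cap you actually need is a neighborhood of (closed fiber $\cup$ sections) carrying a concave fibration compatible with the boundary open book, and producing that is exactly the content of the paper's construction of closed fibrations --- your own closing remark essentially concedes this. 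Second, homeomorphism of the capped manifolds cannot be obtained ``by Freedman'': for an arbitrary finitely presentable $G$ the group need not be good, and no such classification exists; the paper instead localizes the knot surgery inside a simply connected nucleus $N(n)$ with boundary the Brieskorn sphere $\Sigma(2,3,6n-1)$, so the homeomorphism is produced rel a fixed complement $W(G,n,g)$. Third, exoticness need not survive capping: there is no general principle that non-diffeomorphic fillings stay non-diffeomorphic after gluing a fixed cap; the paper's logic goes the other way (distinct Seiberg--Witten invariants of the \emph{closed} manifolds via the Fintushel--Stern knot surgery formula, then a boundary-extension argument to distinguish the fillings).

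The non-holomorphicity step is also not formal. Your key claim that complex surfaces in a fixed homeomorphism class admit only finitely many deformation (hence diffeomorphism) types is false in the relevant range: the elliptic surfaces $E(n)_{p,q}$ (e.g.\ Dolgachev surfaces) give infinitely many pairwise non-diffeomorphic complex surfaces within one homeomorphism type. This is precisely why the paper argues via Kodaira dimension: Luttinger surgery preserves $\kappa^s$, so $\kappa^s(X_n(G))=1$; if some member were complex it would be properly elliptic, and then a case analysis on $\pi_1$ (infinite, finite non-cyclic, finite cyclic) using Ue's diffeomorphism classification and a careful choice of the knot family to avoid the Seiberg--Witten invariants of all $E(n)_{p,q}$ rules complexity out, with a separate argument (replacing $K_i$ by $K\# K_i$) when $X_n(G)$ itself is non-complex but some $X_n(G)_K$ is. To repair your approach you would need, at minimum, an independent construction of the fillings together with a compatibly fibered concave cap --- which is in effect the paper's route of building the closed Lefschetz fibrations (with the square-zero torus meeting each fiber twice) first and performing knot surgery there.
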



In his ground-breaking work, Donaldson \cite{don} proved that every
closed symplectic 4-manifold admits a Lefschetz pencil over the
$2$-sphere and Gompf \cite{Gom} showed that every finitely
presentable group $G$ can be realized as the fundamental group of
some closed symplectic $4$-manifold. Since a Lefschetz pencil can be
turned into a Lefschetz fibration by blowing up its base
locus---that has no effect on the fundamental group of the
underlying $4$-manifold---one immediately obtains the existence of a
closed symplectic $4$-manifold with fundamental group $G$, which
admits a Lefschetz fibration over $S^2$.

Since the total space of a Lefschetz fibration with fiber genus
greater than one admits a symplectic structure \cite{gs}, the result
above can also be proven by constructing an explicit Lefschetz
fibration over $S^2$ whose total space has fundamental group $G$
(cf. \cite{ABKP} and \cite{kor}).

In this paper, using \emph{Luttinger surgery}, we describe a new
construction of a closed symplectic $4$-manifold with $\pi_1=G$ and
$b_2^+ \geq 2$ which admits a Lefschetz fibration over $S^2$ that
has two additional features: (i) the symplectic $4$-manifold
contains a homologically essential embedded torus of square zero
intersecting each fiber of the Lefschetz fibration twice and (ii)
the Lefschetz fibration admits many $(-2)$-sphere sections disjoint
from this embedded torus. Moreover, by performing the knot surgery
operation along on such a homologically essential torus, we obtain
an infinite family of non-holomorphic \emph {exotic Lefschetz
fibrations} over $S^2$ with $\pi_1=G$---which is the content of
Theorem~\ref{main2}. Once we have these results at hand, we follow a
strategy similar to the one employed in \cite{ako} to prove
Theorem~\ref{main1}.

We would like to emphasize that none of the previous constructions
of Lefschetz fibrations in the literature could be effectively
utilized to prove the main results of this article.  For example,
the Lefschetz fibrations over $S^2$ described in \cite{kor} do not
carry the homologically essential tori we need for producing  exotic
copies of the Stein fillings using the knot surgery operation. This
is due to the fact that the examples in \cite{kor} obtained by
performing many symplectic sums along higher genus surfaces, in
contrast to the examples presented in this article, where we perform
\emph{only two} symplectic sums. Moreover, the \emph{separating
vanishing cycles} appearing in those Lefschetz fibrations do not
allow to prove the Steinness of the remaining piece after the
removal of some sections and a fiber.

The direct approach using Donaldson's Lefschetz pencils would not
work for us either, since any of the sections of a Lefschetz
fibration obtained by blowing up the base locus of a Lefschetz
pencil has self-intersection $-1$ and therefore is not suitable for
our construction of exotic Stein fillings of an isolated complex
surface singularity. Furthermore, the additional feature (i) listed above
are absolutely crucial for the purposes of this paper. The existence of such
tori serve to preserve a certain horizontal Lefschetz fibration structure after
the knot surgery operation.


We would like to point out that the total space of any of the Lefschetz fibrations that we construct in this paper is
symplectically minimal, which follows from Usher's theorem in \cite{Us}, and has $b_2^+ \geq 2$. The case $b_2^+ = 1$ has been studied separately in \cite{AZ}.
These later examples, however, do not necessarily yield Stein fillings.

\section{Luttinger surgery and symplectic sum}

Luttinger surgery (cf. \cite{Lu}, \cite{ADK}) has been a very
effective tool recently for constructing exotic smooth structures on
$4$-manifolds. In this section, we briefly recall Luttinger surgery
and symplectic sum operations.

\subsection{Luttinger surgery} \label{subsec:Luttinger}
Let $L$ be a Lagrangian torus embedded in a closed symplectic
$4$-manifold $(X, \omega)$. It follows that $L$ has a trivial normal
bundle. In addition, by the Lagrangian neighborhood theorem of
Weinstein, a neighborhood $\nu L$ of $L$ in $X$ can be identified
\emph{symplectically} with a neighborhood of the zero-section in the
cotangent bundle $T^*L \simeq T \times \mathbb{R}^2$ with its
standard symplectic structure. Let $\gamma$ be any simple closed
curve on $L$. The Lagrangian framing described above determines, up
to homotopy, a push-off of $\gamma$ in $\partial(\nu L)$, which we
denote by $\gamma'$.

\begin{Def}  For any integer $m$, the $(L,\gamma,m)$ \emph{Luttinger
surgery}\/ on $X$\/ is defined as
$$X_{L,\gamma,m} = ( X - \nu L ) \cup_{\phi} (S^1 \times S^1
\times D^2),$$ where, for  a meridian  $\mu_{L}$ of $L$,  the gluing map $\phi : S^1 \times S^1 \times \partial D^2 \to
\partial(X - \nu L )$ satisfies
$\phi([\partial D^2]) =[\mu_{L}] + m[\gamma']$ in $H_{1}(\partial(X
- \nu L)$.
\end{Def}

\begin{Rem}
A salient feature of Luttinger surgery is that it can be done
symplectically, i.e., the symplectic form $\omega$ on $X - \nu L$
can be extended to a symplectic form on $X_{L,\gamma,m}$ as shown in
\cite{ADK}.
\end{Rem}

\begin{Lem}\label{LSL} We have $\pi_1(X_{L,\gamma,m}) = \pi_1(X- \nu L)/N(\mu_{L}
\gamma'^m)$, where $N(\mu_{L} \gamma'^m)$ denotes the normal
subgroup of $\pi_1(X- \nu L)$ generated by the product $\mu_{L}
\gamma'^m$. Moreover, we have $\sigma(X)=\sigma(X_{L,\gamma,m})$,
and $\chi(X)=\chi(X_{L,\gamma,m})$, where $\sigma$ and $\chi$ denote
the signature and the Euler characteristic, respectively.
\end{Lem}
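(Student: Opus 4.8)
The plan is to verify the three assertions in turn: the fundamental-group formula by van Kampen's theorem, and the equalities of $\sigma$ and $\chi$ by standard additivity.

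For the fundamental group, note first that since $L$ has trivial normal bundle we have $\nu L \cong T^2 \times D^2$, so $\partial(X - \nu L) \cong T^3$; fix a basepoint on it. Thickening the two pieces of the decomposition $X_{L,\gamma,m} = (X - \nu L) \cup_\phi (S^1 \times S^1 \times D^2)$ to open sets whose intersection deformation retracts onto $T^3$, van Kampen's theorem presents $\pi_1(X_{L,\gamma,m})$ as the pushout of $\pi_1(X - \nu L) \xleftarrow{\,f\,} \pi_1(T^3) \xrightarrow{\,g\,} \pi_1(S^1 \times S^1 \times D^2)$. Here $g$ is the inclusion-induced surjection $\mathbb{Z}^3 \to \mathbb{Z}^2$ whose kernel is the infinite cyclic group generated by the meridian class $[\partial D^2]$ of the solid-torus factor, while $f$ is the composite of $\phi_*$ with the map $\pi_1(T^3) \to \pi_1(X - \nu L)$ induced by the inclusion of the boundary. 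A pushout of groups along a surjection with kernel $K$ is the quotient of the remaining vertex by the normal closure of the image of $K$; applied here, this gives $\pi_1(X_{L,\gamma,m}) = \pi_1(X - \nu L)/N(f([\partial D^2]))$. Finally, $\pi_1(T^3)$ is abelian and $\phi$ satisfies $\phi_*([\partial D^2]) = [\mu_L] + m[\gamma']$, so $f([\partial D^2])$ is exactly the element $\mu_L\gamma'^m$, regarded in $\pi_1(X - \nu L)$ via the boundary inclusion, and the claimed formula follows.

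For the Euler characteristic I would use $\chi(A \cup B) = \chi(A) + \chi(B) - \chi(A \cap B)$ with $A = X - \nu L$ and $B$ a copy of $T^2 \times D^2$ glued along $T^3$, namely $B = \nu L$ for $X$ and $B = S^1 \times S^1 \times D^2$ for $X_{L,\gamma,m}$; since $\chi(T^2 \times D^2) = \chi(T^2) = 0$ and $\chi(T^3) = 0$ in both cases, we get $\chi(X) = \chi(X - \nu L) = \chi(X_{L,\gamma,m})$. For the signature I would invoke Novikov additivity: cutting a closed oriented $4$-manifold along a separating closed $3$-manifold splits the signature additively over the two pieces. Since each of $X$ and $X_{L,\gamma,m}$ is obtained from $X - \nu L$ by gluing a copy of $T^2 \times D^2$ along $T^3$, and the intersection form of $T^2 \times D^2$ is identically zero (its second rational homology is spanned by $[T^2 \times \mathrm{pt}]$, of square zero), we obtain $\sigma(X) = \sigma(X - \nu L) + 0 = \sigma(X_{L,\gamma,m})$.

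The only step that requires care is the group-theoretic one: one must check that the van Kampen pushout along the non-injective edge map $g$ collapses to exactly the single relation coming from $[\partial D^2]$ --- with no residual contribution from the longitudinal generators of $\pi_1(T^3)$, which are already identified on both sides --- and that $f([\partial D^2])$ is the word $\mu_L\gamma'^m$ itself rather than a conjugate of it. This is precisely where the defining property of $\phi$ (that $[\partial D^2] \mapsto [\mu_L] + m[\gamma']$) enters, together with the observation that passing to the normal closure makes the order of $\mu_L$ and $\gamma'^m$ irrelevant. The remaining computations are routine.
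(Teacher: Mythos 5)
Your proof is correct and follows exactly the route the paper indicates: Seifert--van Kampen for the fundamental group statement, Novikov additivity (with $\sigma(T^2\times D^2)=0$) for the signature, and Euler characteristic additivity for $\chi$; you have simply filled in the details the paper leaves implicit, including the correct handling of the pushout along the surjection $\pi_1(T^3)\to\pi_1(S^1\times S^1\times D^2)$.
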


\begin{proof}
The result about the fundamental group follows from the Seifert-van
Kampen's theorem, while the signature formula is just Novikov
additivity. The result about the Euler characteristics is evident.
\end{proof}

\subsection{Symplectic sum}

Let $(X_1, \ \omega_1)$ and $(X_2, \ \omega_2)$ be closed symplectic
$4$-manifolds containing closed embedded surfaces $F_1$ and $F_2$ of
genus $g$, with normal bundles $\nu_1$ and $\nu_2$, respectively.
Assume that $e(\nu_1) + e(\nu_2) = 0$, where $e(\nu_i)$ denotes the
Euler class of the bundle $\nu_i$.

\begin{Def} For any choice of a fiber-orientation reversing bundle isomorphism
$\psi: \nu_1 \to \nu_2$, the \emph{symplectic sum} of $X_1$ and
$X_2$ is defined as the closed $4$-manifold $$X_1 \#_{\psi} X_2 =
(X_1 - \nu_1)\cup_{\psi} (X_2 -\nu_2).$$

\end{Def}

This gluing is called a symplectic sum since there is a natural
isotopy class of symplectic structures on $X_1 \#_{\psi} X_2$
extending the symplectic structures on $X_1 -
\nu_1$ and $X_2 -\nu_2$ as shown in \cite{Gom}.




\section{Lefschetz fibrations with arbitrary fundamental group}

In this section, for each finitely presentable group G,  we
construct a closed symplectic $4$-manifold with $\pi_1=G$ and $b_2^+
\geq 2$ which admits a Lefschetz fibration over $S^2$ having the
additional properties (i) and (ii) listed in the introduction. Parts
of this section overlaps with certain parts of \cite{AZ}, where the
case $b_2^+ = 1$ has been studied. In the following, we first
explain our construction for the case of a finitely generated free
group, before we deal with the general case.

\subsection{Construction for a finitely generated free group} \label{fund}

The product $\Sigma_{g}\times \mathbb{T}^2$ admits a symplectic
structure, where $\Sigma_{g}$ and  $\mathbb{T}^2$ denote a closed
symplectic genus $g$ surface and a symplectic torus, respectively.
Let $\{p_i, q_i \geq 0  : 1 \leq i \leq g \}$ be a set of
nonnegative integers and let $\overline{p}=(p_1, \ldots, p_g)$ and
$\overline{q}= (q_1, \ldots, q_g)$.  We denote by
$M_{g}(\overline{p}, \overline{q})$ the symplectic $4$-manifold
obtained by performing a Luttinger surgery on the symplectic
$4$-manifold $\Sigma_{g}\times \mathbb{T}^2$ along each of the $2g$
Lagrangian tori with the associated framings belonging to the set

$$\mathcal{L}=\{(a_i' \times c', a_i', -p_{i}), (b_i' \times c'', b_i',
-q_{i}) \; | \; 1 \leq i \leq g\},$$

\noindent where $\{a_i,b_i : 1 \leq i \leq g\}$ is the set of
standard generators of $\pi_1(\Sigma_{g})$ and $c,d$\/ are the
standard generators of $\pi_1(\mathbb{T}^2)$. This family of
symplectic $4$-manifolds $M_{g}(\overline{p}, \overline{q})$ have
been studied 
in \cite{AP} (see discussion on pages 2--3, 13--14). For further details, we refer the
reader to \cite{AP} and references therein. The proof of the
following result essentially follows from the Example on page 189 in
\cite{ADK}.

\begin{Lem}\label{fiber} The $4$-manifold $M_{g}(\overline{p}, \overline{q})$
admits a locally trivial genus $g$ fibration over $\mathbb{T}^2$.

\end{Lem}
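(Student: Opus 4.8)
The plan is to exhibit the genus $g$ fibration on $M_{g}(\overline{p}, \overline{q})$ directly from the fibration on $\Sigma_g \times \mathbb{T}^2$, by observing that the $2g$ Luttinger surgeries are all performed in a neighborhood of a single fiber. First I would take the obvious projection $\Sigma_g \times \mathbb{T}^2 \to \mathbb{T}^2$, which is a locally trivial genus $g$ bundle (in fact the trivial bundle). The key point is that each Lagrangian torus in the list $\mathcal{L}$ has the form $\gamma \times \eta$, where $\gamma$ is one of the standard curves $a_i$ or $b_i$ on $\Sigma_g$ and $\eta$ is one of the curves $c$ or $d$ (the primes denoting the relevant push-offs $c', c''$ etc.) on $\mathbb{T}^2$; in particular each such torus lies in $\Sigma_g \times \eta$, so its tubular neighborhood $\nu L$ maps into a small arc (really a small disk neighborhood of the relevant curve) in the base $\mathbb{T}^2$. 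By isotoping the curves $c', c''$ slightly if necessary, I can arrange all $2g$ tori to lie over a single coordinate disk $D \subset \mathbb{T}^2$, i.e. inside $\Sigma_g \times D$, where the bundle is trivial, $\Sigma_g \times D$.

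Next I would analyze what Luttinger surgery does over this disk. Inside $\Sigma_g \times D$ each torus $L = \gamma \times \eta$ (with $\eta \subset D$ a simple closed curve) has a neighborhood $\nu L \cong \gamma \times \eta \times D^2$, and one checks that the Lagrangian framing push-off $\gamma'$ is exactly the $\gamma$-direction factor, while the complementary $S^1$ factor $\eta$ can be taken to be a loop in $D$. The surgery replaces $\nu L$ by $S^1 \times S^1 \times D^2$ via a gluing that sends $\partial D^2$ to $\mu_L + m\gamma'$. The crucial observation is that this gluing can be chosen to be \emph{fiber preserving} with respect to the second projection onto the $\eta \subset D$ factor: concretely, one cuts $\Sigma_g \times D$ along $\Sigma_g \times (\text{arc})$ and reglues by a diffeomorphism of $\Sigma_g$ supported near $\gamma$ (a power of the point-pushing/Dehn-twist-type map along $\gamma$), realizing the monodromy change as an element of the mapping class group of the fiber $\Sigma_g$. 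This is precisely the content of the Example on page 189 of \cite{ADK}, to which the lemma already appeals: doing the $(\gamma \times \eta, \gamma, -p)$ Luttinger surgery on $\Sigma_g \times \mathbb{T}^2$ yields the mapping torus of a self-diffeomorphism of $\Sigma_g$, hence a genus $g$ surface bundle over $\mathbb{T}^2$.

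Then I would assemble the pieces. Away from $\Sigma_g \times D$ nothing is changed, so the fibration structure persists there; over $D$ the local model above shows the surgered piece is again a (locally trivial, indeed now possibly twisted) genus $g$ bundle over $D$, and the two descriptions agree on the overlap $\Sigma_g \times \partial D$ because the gluing diffeomorphisms are fiber preserving. Performing all $2g$ surgeries simultaneously is harmless because the tori $a_i \times c'$, $b_i \times c''$ can be taken pairwise disjoint with disjoint tubular neighborhoods, each supported near its own curve on $\Sigma_g$ and over its own subdisk of $D$; the resulting monodromies simply multiply in the mapping class group of $\Sigma_g$. Hence the second projection descends to a locally trivial genus $g$ fibration $M_{g}(\overline{p}, \overline{q}) \to \mathbb{T}^2$.

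I expect the main obstacle to be the bookkeeping in the local model: verifying carefully that the Lagrangian framing push-off $\gamma'$ really coincides (up to the allowed homotopy) with the $\Sigma_g$-fiber-direction curve, and that the meridian $\mu_L$ together with the $\eta$-direction loop give coordinates in which the Luttinger gluing is fiber preserving over $D$. Once that identification is pinned down the rest is formal, and the cited Example in \cite{ADK} already does this computation for a single surgery, so I would largely quote it and only indicate why the $2g$ disjoint surgeries can be handled at once.
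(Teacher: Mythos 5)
Your overall strategy---realize each Luttinger surgery along these product tori as a fibration-preserving cut-and-reglue whose effect on the fibration is a Dehn twist in the fiber, quoting the Example on page 189 of \cite{ADK}---is the same as the paper's. But your geometric setup contains a genuine error: the tori $a_i'\times c'$ and $b_i'\times c''$ do \emph{not} lie over a disk in the base. Under the projection $\Sigma_g\times\mathbb{T}^2\to\mathbb{T}^2$ each such torus maps \emph{onto} the curve $c'$ (resp.\ $c''$), which is a parallel push-off of the essential curve $c$; hence $\nu L$ projects to an \emph{annulus}, and no isotopy can place $c'$ and $c''$ inside a coordinate disk $D\subset\mathbb{T}^2$. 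The local model you then build inside $\Sigma_g\times D$, with $\eta$ a null-homotopic loop in $D$ and the regluing performed along $\Sigma_g\times(\mathrm{arc})$, does not describe the surgeries actually being performed. The error is not merely cosmetic: a modification supported over a disk in the base, preserving the fibration and agreeing with the product fibration near $\partial D$, cannot change the bundle at all when $g\ge 2$ (any $\Sigma_g$-bundle over $D$ is trivial and, since $\mathrm{Diff}_0(\Sigma_g)$ is contractible, the regluing along $\partial D$ is standard), so your picture would force $M_g(\overline{p},\overline{q})$ to be the trivial bundle $\Sigma_g\times\mathbb{T}^2$---contradicting the fundamental group computation in Corollary~\ref{funda}, where the relations $[b_i^{-1},d^{-1}]=a_i^{p_i}$, $[a_i^{-1},d]=b_i^{q_i}$ record precisely the nontrivial monodromy created by the surgeries.

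The correct local picture, which is what the paper (following \cite{ADK}) uses, is this: all $2g$ tori lie over an annular neighborhood $A$ of $c$ in the base, i.e.\ inside $\Sigma_g\times A$, over parallel essential circles. The $(a_i'\times c',a_i',-p_i)$ surgery is the same as cutting $\Sigma_g\times\mathbb{T}^2$ along the preimage $\Sigma_g\times(\text{parallel copy of }c)$ and regluing by $t_{a_i}^{p_i}$ acting on the fiber (similarly $t_{b_i}^{q_i}$ for the other family); this visibly commutes with the projection to $\mathbb{T}^2$, so the fibration survives, but the fiber monodromy along the base direction \emph{transverse} to $A$ is composed with the corresponding Dehn twist power---which is exactly what reproduces the relations of Corollary~\ref{funda}. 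Your identification of the Lagrangian push-off $\gamma'$ with the fiber-direction curve, and the remark that the finitely many disjoint surgeries contribute monodromies that multiply in the mapping class group, are fine and carry over verbatim once the support is corrected from a disk to an annulus.
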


\begin{proof}  The $(a_i' \times c', a_i', -p_{i})$ or $(b_i' \times c'', b_i', -q_{i})$ Luttinger
surgery in the trivial bundle $\Sigma_{g}\times \mathbb{T}^2$
preserves the fibration structure over $\mathbb{T}^2$ introducing a
monodromy of the fiber $\Sigma_{g}$ along the curve $c$ in the base.
Depending on the type of the surgery the monodromy is either
$(t_{a_i})^{p_i}$ or $(t_{b_i})^{q_i}$, where $t$ denotes a Dehn
twist.

\end{proof}

An alternative proof of the above lemma can be obtained by
identifying the symplectic $4$-manifold $M_{g}(\overline{p},
\overline{q})$ with $M_{K} \times S^{1}$, where $M_{K}$ is the
$3$-manifold obtained by $0$-surgery along a suitably chosen fibered
genus $g$ knot $K$ in $S^{3}$ (see for example the symplectic
building blocks in \cite{Ak} via knot surgery and their construction
using Luttinger surgery in appendix of \cite{ABP}).


The proof of the next result---which is essentially a consequence of
Lemma~\ref{LSL}---can be found in \cite{AP}.

\begin{Cor}\label{funda}

The fundamental group of $M_{g}(\overline{p}, \overline{q})$ is
generated by $a_i,b_i$ $(i=1,\ldots, g)$ and $c,d$,
with the following relations:\\

$(1)$ $[b_i^{-1},d^{-1}]=a_i^{p_{i}},\ \ [a_i^{-1},d]=b_i^{q_{i}}$,
for all $1 \leq i \leq g$,

$(2)$ $[a_i,c]=1,\ \  [b_i,c]=1$, for all $1 \leq i \leq g$,

$(3)$ $[a_1,b_1][a_2,b_2]\cdots[a_g,b_g]=1$, and

$(4)$ $[c,d]=1$. \\

\end{Cor}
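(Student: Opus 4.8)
The plan is to compute $\pi_1(M_g(\overline p,\overline q))$ by starting from the Seifert–van Kampen presentation of $\pi_1(\Sigma_g\times\mathbb T^2)$ and then feeding in, one Lagrangian torus at a time, the relation produced by Lemma~\ref{LSL}. First I would record the standard presentation of the base: $\pi_1(\Sigma_g\times\mathbb T^2)$ is generated by $a_1,b_1,\dots,a_g,b_g$ and $c,d$, subject to the surface relation $[a_1,b_1]\cdots[a_g,b_g]=1$, the torus relation $[c,d]=1$, and the commuting relations $[x,y]=1$ for every generator $x$ of $\pi_1(\Sigma_g)$ and every generator $y$ of $\pi_1(\mathbb T^2)$. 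These last relations already give items $(2)$ (namely $[a_i,c]=[b_i,c]=1$) together with the additional relations $[a_i,d]=[b_i,d]=1$, which will be modified by the surgeries, and item $(4)$, $[c,d]=1$, and item $(3)$, the surface relation, which survives untouched since the surgeries are supported in tori disjoint from the relevant $2$-skeleton.

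Next I would analyze the effect of each Luttinger surgery using Lemma~\ref{LSL}: performing $(L,\gamma,m)$ surgery kills the normal subgroup generated by $\mu_L\gamma'^m$ inside $\pi_1(X-\nu L)$. The key input, exactly as in the Example on page~189 of \cite{ADK}, is the identification of the meridian $\mu_L$ and the Lagrangian push-off $\gamma'$ in terms of the ambient generators. For the torus $L=a_i'\times c'$ with framing direction $\gamma=a_i'$ and coefficient $-p_i$, the meridian $\mu_L$ is represented by the commutator $[b_i^{-1},d^{-1}]$ (a small loop linking $L$ can be pushed to the standard commutator of the two "dual" generators), while $\gamma'$ is homotopic to $a_i$; hence the surgery imposes $[b_i^{-1},d^{-1}]\,a_i^{-p_i}=1$, i.e. $[b_i^{-1},d^{-1}]=a_i^{p_i}$. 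Symmetrically, the torus $b_i'\times c''$ with framing $b_i'$ and coefficient $-q_i$ imposes $[a_i^{-1},d]=b_i^{q_i}$. These are precisely the relations in item $(1)$. I would also note that removing the $2g$ tubular neighborhoods and regluing does not disturb $c$, $d$ or the surface relation, so $(3)$ and $(4)$ persist; and since the surgered-in solid tori $S^1\times S^1\times D^2$ contribute no new generators beyond those already present on $\partial(\nu L)$ (which map into the old generators), no extra generators appear.

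Finally I would assemble the presentation: generators $a_i,b_i,c,d$, relations $(1)$ from the $2g$ surgeries, $(2)$ and $[a_i,d]=[b_i,d]=1$ from the original product structure—but the latter two are now redundant consequences of $(1)$ when $p_i=q_i=0$ and are otherwise replaced by $(1)$, so only $(1)$ survives—$(3)$ the surface relation, and $(4)$ the torus relation. One should double-check that the commuting relations $[a_i,c]=[b_i,c]=1$ are genuinely unaffected, which holds because the curves $c'$ and $c''$ along which we frame lie in the torus directions and the surgeries are performed away from the loops representing $c$.

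The main obstacle will be justifying the precise homotopy classes of $\mu_L$ and $\gamma'$ in $\pi_1\bigl((\Sigma_g\times\mathbb T^2)-\nu L\bigr)$, i.e. checking that the meridian of $a_i'\times c'$ really is the commutator $[b_i^{-1},d^{-1}]$ with the correct orientation conventions, and tracking signs so that the $-p_i$ framing yields $+p_i$ in the exponent. This is a careful but standard computation in the spirit of \cite{ADK} and \cite{AP}, and I would cite \cite{AP} for the bookkeeping rather than reproduce it in full.
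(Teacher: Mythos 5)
Your proposal is correct and follows essentially the same route as the paper, which gives no independent argument but notes the result is ``essentially a consequence of Lemma~\ref{LSL}'' and defers the bookkeeping (the identification of the meridians with $[b_i^{-1},d^{-1}]$, $[a_i^{-1},d]$ and of the Lagrangian push-offs with $a_i$, $b_i$) to \cite{AP}, exactly as you do. The one point to keep honest in a written version is that the relations must be imposed in $\pi_1$ of the complement of the $2g$ Lagrangian tori rather than by ``modifying'' the closed-manifold presentation, but you flag this and the citation to \cite{AP} covers it.
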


The torus $\{{\rm pt}\} \times \mathbb{T}^2 \subset \Sigma_{g}\times
\mathbb{T}^2$ induces a torus $T$ with trivial normal bundle in
$M_{g}(\overline{p}, \overline{q})$. On the other hand,  a regular
fiber of the elliptic fibration on the complex surface $E(n)$ is
also a torus of square zero.

\begin{Def} Let $X_{g,n}(\overline{p}, \overline{q})$ denote the symplectic sum of $M_{g}(\overline{p}, \overline{q})$
along the torus $T$ with the elliptic surface $E(n)$ along a regular
elliptic  fiber.
\end{Def}

\begin{Lem} \label{lef} The symplectic $4$-manifold $X_{g,n}(\overline{p},
\overline{q})$ admits a genus $2g+n-1$ Lefschetz fibration over
$S^2$ with at least $4n + 4$ pairwise disjoint sphere sections of
self intersection $-2$. Moreover, $X_{g,n}(\overline{p},
\overline{q})$ contains a homologically essential embedded torus of
square zero disjoint from these sections which intersects each fiber
of the Lefschetz fibration twice.


\end{Lem}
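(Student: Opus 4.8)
The plan is to build the Lefschetz fibration on $X_{g,n}(\overline{p},\overline{q})$ as a fiber sum of two fibrations matched along a regular fiber. First I would exhibit the needed fibration structures on each summand. By Lemma~\ref{fiber}, the piece $M_g(\overline{p},\overline{q})$ carries a locally trivial genus-$g$ bundle over $\mathbb{T}^2$; restricting this bundle over a small disk $D\subset\mathbb{T}^2$ containing no monodromy shows that the torus $T=\{\mathrm{pt}\}\times\mathbb{T}^2$ (which is a section of $\mathbb{T}^2\to\mathbb{T}^2$ times a point, hence here a multisection/fiber) can be taken to be a torus fiber of a fibration on a tubular neighborhood; more precisely, I want to view $M_g\setminus\nu T$ as fibering over $\mathbb{T}^2$ with generic fiber $\Sigma_g$, and arrange that $T$ meets each $\Sigma_g$-fiber in exactly two points — this is where the choice of $T$ as $\{\mathrm{pt}\}\times\mathbb{T}^2$ versus a section must be reconciled, and it is cleanest to instead regard $M_g(\overline{p},\overline{q})$ itself, after removing $\nu T$, as equipped with the genus-$g$ fibration over $\mathbb{T}^2$ and note that $T$ is transverse to the $\Sigma_g$ fibers, hitting each twice (it projects to $\mathbb{T}^2$ as a double cover branched nowhere, or rather $T\cap\Sigma_g$ consists of the two points $\{\mathrm{pt}\}$ lying over that fiber — I would make this count precise by intersection theory: $[T]\cdot[\Sigma_g]=2$ in $M_g(\overline{p},\overline{q})$, which one reads off from $[\{\mathrm{pt}\}\times\mathbb{T}^2]\cdot[\Sigma_g\times\{\mathrm{pt}\}]$ together with the fact that Luttinger surgery along tori disjoint from a chosen fiber does not change this). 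On the $E(n)$ side, the elliptic fibration $E(n)\to S^2$ has $12n$ nodal fibers and $n$ disjoint sections of square $-n$; after a sequence of blow-ups one obtains the standard picture with sections of square $-2$ — in fact $E(n)$ already has sections of square $-n$, and by the standard trick (using that $E(n)$ contains $E(1)=\mathbb{CP}^2\#9\overline{\mathbb{CP}^2}$ summands, or by rational blow-down/blow-up bookkeeping) one arranges the $4n+4$ sphere sections of self-intersection $-2$; I would cite the standard count here.

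Next I would perform the symplectic sum along $T\subset M_g(\overline{p},\overline{q})$ and a regular elliptic fiber $F\subset E(n)$. The key point is that the fiber-sum operation on fibrations: over the base $\mathbb{T}^2$ of the $M_g$-side and $S^2$ of the $E(n)$-side, removing a neighborhood of a fiber from each and gluing produces a new fibration over the fiber-connected-sum base. Here the $M_g$-side does \emph{not} fiber over $\mathbb{T}^2$ with fiber $T$; rather, both $M_g\setminus\nu T$ and $E(n)\setminus\nu F$ are to be viewed as fibering over their respective surfaces, and the symplectic sum along $T$ and $F$ glues the $\Sigma_g$-fibration on one side to the elliptic fibration on the other \emph{not} as a fiber sum but by the Gompf construction, which — when the surfaces being summed along are transverse to the fibers of \emph{another} fibration present on a neighborhood — yields a Lefschetz fibration on the result whose generic fiber is the fiber-connected sum $\Sigma_g\#_2 (\text{torus fiber pieces})$. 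The genus computation is the core combinatorial step: the new fiber is obtained by taking the genus-$g$ fiber of the $M_g$-side, which meets $T$ in two points, and the genus-$1$ fiber of the $E(n)$-side; after the sum the resulting surface should have genus $2g+n-1$. I would verify this by Euler-characteristic bookkeeping: the fiber of $X_{g,n}$ is built from $2$ copies of (genus-$g$ surface minus $2$ disks) — wait, more carefully, one copy of $\Sigma_g$ punctured twice glued to the relevant piece coming from the $n$-fold structure of $E(n)$; matching $\chi = 2-2(2g+n-1)$ against the pieces pins down the count. I would also track the sections: the $4n+4$ sphere sections of $E(n)$ of square $-2$ are disjoint from the elliptic fiber $F$ used in the sum, so they survive into $X_{g,n}(\overline{p},\overline{q})$ as sphere sections, still of square $-2$ and still pairwise disjoint.

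Finally, for the homologically essential torus of square zero: the $M_g$-side contributes, besides $T$ (which is consumed by the sum), other Lagrangian/symplectic tori — in particular there is a torus coming from $\{\mathrm{pt}\}\times\mathbb{T}^2$ translated, or better, from a curve $a_1\times c$ style torus, that survives the Luttinger surgeries and the symplectic sum, lies in the $M_g$-part of $X_{g,n}$ hence is disjoint from the sphere sections (which live in the $E(n)$ part), has trivial normal bundle, and — by the fibration structure from Lemma~\ref{fiber} — is transverse to the $\Sigma_g$-fibers meeting each twice, so in $X_{g,n}$ it meets each Lefschetz fiber twice. Homological essentiality I would check by pairing with one of the sphere sections or with another homology class (e.g. it has nonzero intersection with a vertical class or is detected in $H_2$ because its class restricts nontrivially to the $M_g$-summand's homology, which is unaffected by the gluing along $T$). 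The main obstacle I anticipate is the genus count and showing that the Gompf symplectic sum of these two fibered pieces genuinely produces a \emph{Lefschetz} fibration (no worse singularities, all vanishing cycles nonseparating-or-as-claimed) with exactly the asserted fiber genus $2g+n-1$ — this requires carefully identifying the monodromy factorization of the sum in terms of the monodromies of the two summands and the two-point matching along $T$ and $F$, and it is here, rather than in the section or torus bookkeeping, that the real work lies.
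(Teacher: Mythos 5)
Your proposal has the right overall silhouette (glue fibration structures across the symplectic sum, let sections survive from the $E(n)$ side, take a torus descending from $\{pt\}\times\mathbb{T}^2$ for the bisection), but the two concrete inputs that actually make the lemma work are missing or wrong. First, $T=\{pt\}\times\mathbb{T}^2$ is a \emph{section} of the genus $g$ bundle $M_{g}(\overline{p},\overline{q})\to\mathbb{T}^2$: the intersection number you invoke is $[\{pt\}\times\mathbb{T}^2]\cdot[\Sigma_g\times\{pt\}]=1$, not $2$. The ``meets each fiber twice'' does not come from the $M_g$ side at all; it comes from the $E(n)$ side, where the paper uses the \emph{second} fibration on $E(n)$ --- the genus $n-1$ hyperelliptic Lefschetz fibration arising from the description of $E(n)$ as the desingularized double cover of $S^2\times S^2$ branched over four copies of $\{pt\}\times S^2$ and $2n$ copies of $S^2\times\{pt\}$ --- for which a regular elliptic fiber $F$ is a bisection. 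Summing the section $T$ of the genus $g$ bundle to the bisection $F$ of the genus $n-1$ fibration gives, over each point of $S^2$, a fiber made of the genus $n-1$ fiber with two disks removed plus two once-punctured copies of $\Sigma_g$, hence genus $(n-1)+2g$. This is exactly the genus count you leave unresolved, and it cannot be completed in your setup because you work with the elliptic (genus $1$) fibration on $E(n)$, where the fibration structures do not match across the gluing. For the same reason, a parallel copy of $T$ meets each $\Sigma_g$ fiber once and meets each new Lefschetz fiber twice only because each new fiber contains two $\Sigma_g$ pieces; your alternative ``$a_1\times c$ style'' Lagrangian torus is not transverse to the fibers and pairs to zero with the fiber class, so it would not serve as the required bisection, whereas the parallel copy of $T$ is homologically essential precisely because it pairs to $2$ with the fiber.

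Second, the $(-2)$-sphere sections cannot be produced the way you suggest: blowing up changes the diffeomorphism type of $E(n)$, and sections of the elliptic fibration have square $-n$ and are in any case sections of the wrong fibration (and meet $F$, so they would not survive the sum). What is needed are sphere sections of the genus $n-1$ fibration, and the paper obtains them by writing $E(n)$ as a fiber sum of two copies of $\CP\#(4n+5)\CPb$ along a genus $n-1$ fiber and invoking Tanaka's theorem that the hyperelliptic genus $n-1$ fibration on $\CP\#(4n+5)\CPb$ admits at least $4n+4$ pairwise disjoint $(-1)$-sphere sections; matching these in pairs across the fiber sum yields $4n+4$ pairwise disjoint $(-2)$-sphere sections of the genus $n-1$ fibration on $E(n)$, which then induce sections of the genus $2g+n-1$ fibration on $X_{g,n}(\overline{p},\overline{q})$ disjoint from the torus descending from the $M_g$ side. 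Without the genus $n-1$ fibration on $E(n)$ (with $F$ as bisection) and without Tanaka's sections plus the fiber-sum decomposition of $E(n)$, both the fiber-genus claim and the section claim remain unproved in your argument.
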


\begin{proof}
By definition,  $X_{g,n}(\overline{p}, \overline{q})$ is obtained as
the symplectic sum of the complex surface $E(n)$ along a regular
elliptic fiber with the symplectic $4$-manifold $M_{g}(\overline{p},
\overline{q})$ along a torus $T$ which is induced from $\{ pt \}
\times \mathbb{T}^2$ in $\Sigma_g \times \mathbb{T}^2$. Notice that
by Lemma~\ref{fiber}, $M_{g}(\overline{p}, \overline{q})$ is a
locally trivial genus $g$ bundle over $\mathbb{T}^2$ where $T$ is a
section.

On the other hand, since the complex surface $E(n)$ can be obtained
as a desingularization of the branched double cover of $S^2 \times
S^2$ with the branching set being $4$ copies of $\{pt\} \times S^2$
and $2n$ copies of $S^2 \times \{pt\}$, it admits a genus $n-1$
fibration over $S^2$ as well as an elliptic fibration over $S^2$,
both of which are obtained by the projection of $S^2 \times S^2$
onto one of the $S^2$ factors. In fact, both fibrations can be
realized as Lefschetz fibrations and a regular fiber of the elliptic
fibration on $E(n)$ intersects every genus $n-1$ fiber of the other
Lefschetz  fibration twice.

Consequently, when performing the symplectic sum of $E(n)$ along a
regular elliptic fiber with the surface bundle $M_{g}(\overline{p},
\overline{q})$ along a torus section $T$,  the fibration structures
in both pieces can be glued together to yield a genus $2g + n -1$
Lefschetz fibration on $X_{g,n}(\overline{p}, \overline{q})$ over
$S^2$.

In addition, we observe that a sphere section of the genus $n-1$
Lefschetz fibration $E(n) \to S^2$ induce a section of the genus $2g
+ n - 1$ Lefschetz fibration $X_{g,n}(\overline{p}, \overline{q})
\to S^2$. Since $E(n)$ can be realized as a fiber sum of two copies
of $\CP \#(4n+5)\CPb$ along a genus $n-1$ surface, and since there is
a Lefschetz fibration $\CP \#(4n+5)\CPb \to S^2$ with at least $4n +
4$ pairwise disjoint sphere sections of self intersection $-1$ (cf.
\cite{t}), we conclude that the genus $n-1$ Lefschetz fibration $
E(n) \to S^2$ has at least $4n + 4$ pairwise disjoint sphere
sections of self intersection $-2$.

Moreover, the homologically essential embedded torus $T$ of square
zero in $X_{g,n}(\overline{p}, \overline{q})$ which is disjoint from
these sections intersects each fiber of the Lefschetz fibration
twice.


\end{proof}

\begin{Lem}\label{pres} The fundamental group of the symplectic $4$-manifold $X_{g,n}(\overline{p}, \overline{q})$ is
generated by the set $\{a_i,b_i : 1 \leq i \leq g\}$ subject to the
relations: \\

$(1)$ $a_i^{p_i} = 1,b_i^{q_i} = 1$, for all $1 \leq i \leq g$, and

$(2)$ $\Pi_{j=1}^{g} [a_j, b_j] = 1.$

\end{Lem}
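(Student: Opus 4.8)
The plan is to compute $\pi_1(X_{g,n}(\overline{p}, \overline{q}))$ via the Seifert–van Kampen theorem applied to the symplectic sum decomposition
$$X_{g,n}(\overline{p}, \overline{q}) = \big(M_{g}(\overline{p}, \overline{q}) - \nu T\big) \cup_{\psi} \big(E(n) - \nu F\big),$$
where $F$ is a regular elliptic fiber. First I would record the fundamental group of each piece together with its boundary. Since $E(n)$ is simply connected and admits a sphere section meeting $F$ once, a standard Mayer–Vietoris/van Kampen argument shows that $E(n) - \nu F$ is simply connected (the meridian $\mu_F$ of $F$ bounds a disk in the complement, coming from the section), and $\partial(E(n) - \nu F) = F \times S^1$ has its $\pi_1$ killed entirely in $E(n) - \nu F$: both the two $\mathbb{T}^2$-directions of the fiber and the meridional $S^1$ die. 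For the other side, by Lemma~\ref{fiber} the manifold $M_{g}(\overline{p}, \overline{q})$ is a locally trivial $\Sigma_g$-bundle over $\mathbb{T}^2$ in which $T$ is a section, so $M_{g}(\overline{p}, \overline{q}) - \nu T$ retracts onto the total space of the $\Sigma_g$-bundle over a once-punctured torus; its fundamental group therefore has presentation obtained from Corollary~\ref{funda} by dropping the relator $[c,d]=1$ coming from the closed-up base torus.

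Next I would feed these into van Kampen. The intersection $\partial(\nu T)$ is a $3$-torus, normally generated (as a subgroup of either complement) by the images of $c$, $d$ (the $\mathbb{T}^2$-directions of $T$) and the meridian $\mu_T$. Under the gluing, $\mu_T$ is identified with $\mu_F$, which is nullhomotopic in $E(n) - \nu F$, so the amalgamation imposes $\mu_T = 1$; and the classes $c$, $d$, being carried by the fiber $F \times \{pt\}$ on the $E(n)$ side, are also killed in $E(n) - \nu F$. Hence in $\pi_1(X_{g,n})$ we must set $c = d = 1$ on top of the presentation of $\pi_1(M_g(\overline{p},\overline{q}) - \nu T)$. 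Imposing $c=1$ makes relations $(2)$ of Corollary~\ref{funda} vacuous; imposing $d=1$ turns relations $(1)$ into $[b_i^{-1}, 1] = a_i^{p_i}$ and $[a_i^{-1}, 1] = b_i^{q_i}$, i.e. $a_i^{p_i} = 1$ and $b_i^{q_i} = 1$; relation $(3)$ survives as $\prod_{j=1}^{g} [a_j, b_j] = 1$; and relation $(4)$ was already trivialized. This yields exactly the claimed presentation on generators $\{a_i, b_i\}$.

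The step I expect to require the most care is the bookkeeping of which curves on the $3$-torus $\partial(\nu T)$ map to which elements on each side, and in particular verifying that $c$ and $d$ really do die in $E(n) - \nu F$ rather than just the meridian. This hinges on the fact that the elliptic fiber $F$ in $E(n)$ bounds (rationally, and in fact after a suitable vanishing-cycle argument, with the right framings) so that its two generating loops become nullhomotopic in the complement — a consequence of $E(n)$ being simply connected together with the existence of a cusp fiber or, equivalently, the fact that $E(1) = \CP\#9\CPb$ already kills the fiber's $\pi_1$ in the complement and $E(n)$ is a fiber sum. I would also need to check the orientation/identification conventions of the gluing $\psi$ so that the meridian–longitude roles are not swapped; but since $\psi$ only reverses the normal fiber orientation and acts as the identity on the base tori up to the chosen identification, this does not affect the group-theoretic conclusion. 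With these points pinned down, the presentation in the statement follows directly.
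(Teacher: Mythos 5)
Your overall strategy is the same as the paper's (Seifert--van Kampen applied to the fiber sum decomposition, using $\pi_1(E(n)-\nu F)=1$ to kill $c$, $d$ and the meridian), but there is a concrete error in your description of the piece $M_{g}(\overline{p}, \overline{q})-\nu T$. Since $T$ descends from $\{pt\}\times\mathbb{T}^2$, it is a \emph{section} of the genus $g$ fibration of Lemma~\ref{fiber} over $\mathbb{T}^2$, so removing $\nu T$ removes a disk from each \emph{fiber}: the complement is a once-punctured-$\Sigma_g$ bundle over the closed base torus, not a $\Sigma_g$-bundle over a once-punctured torus. Consequently the relation that fails in $\pi_1(M_{g}(\overline{p}, \overline{q})-\nu T)$ is the surface relation $(3)$ of Corollary~\ref{funda}, whose left-hand side $[a_1,b_1]\cdots[a_g,b_g]$ now represents the meridian $\mu_T$, while $[c,d]=1$ (relation $(4)$) continues to hold there; you have swapped these. (In the unsurgered model the complement is $(\Sigma_g - D^2)\times\mathbb{T}^2$, where $c$ and $d$ visibly still commute and the boundary of the punctured fiber is the commutator product; the Luttinger tori are disjoint from $\nu T$, so relations $(1)$, $(2)$, $(4)$ persist.)

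Because $\pi_1(E(n)-\nu F)=1$ kills everything carried by $\partial(\nu T)$, your final presentation comes out numerically the same, but as written the relation $\prod_{j}[a_j,b_j]=1$ in $\pi_1(X_{g,n}(\overline{p},\overline{q}))$ is justified by the false claim that it already holds in the complement of $T$; in fact it holds only because $\mu_T=[a_1,b_1]\cdots[a_g,b_g]$ becomes nullhomotopic after gluing in $E(n)-\nu F$, and your imposition of ``$\mu_T=1$'' is vacuous in your setup since you never identify $\mu_T$ in terms of the generators. The fix is exactly the paper's argument: record that $(1)$, $(2)$, $(4)$ hold in the complement, that the commutator product equals the meridian, and then quotient by the normal closure of $c$, $d$, $\mu_T$ coming from the simply connected $E(n)$ side. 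With that correction your argument matches the paper's proof.
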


\begin{proof}

Choose a base point $x$ on $\partial(\nu T)$ such that
$\pi_1(M_{g}(\overline{p}, \overline{q})\setminus\nu T,x)$ is
normally generated by $a_i,b_i$ ($i=1, \cdots, g$) and $c,d$. Notice that
the symplectic torus $\{ pt \}\times \mathbb{T}^2$ is disjoint from the
neighborhoods of $2g$ Lagrangian tori in $\mathcal{L}$ above.
Consequently, all but relation (3) in Corollary~\ref{funda} holds in
$\pi_1(M_{g}(\overline{p}, \overline{q})\setminus\nu T)$. The
product $[a_1,b_1][a_2,b_2]\cdots[a_g,b_g]$ is no longer trivial,
and it represents a meridian of $T$ in $\pi_1(M_{g}(\overline{p},
\overline{q}) \setminus\nu T)$. Since $\pi_1(E(n) \setminus (\nu(T))
= 1$, after the fiber sum we have $c = d = 1$ in the fundamental
group of $X_{g,n}(\overline{p}, \overline{q})$. Hence we obtain the
desired presentation for $\pi_1 (X_{g,n}(\overline{p},
\overline{q}))$.

\end{proof}

\begin{Cor} By setting $p_i =1$ and $q_i = 0$, for all $1 \leq i \leq g$,
we observe that the fundamental group of $X_{g,n}((1,1,
\ldots,1),(0,0,\ldots,0))$ is a free group of rank $g$.
\end{Cor}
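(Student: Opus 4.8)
The plan is to specialize, directly, the presentation furnished by Lemma~\ref{pres}. With $p_i = q_i$ replaced by $1$ and $0$ respectively, relation $(1)$ becomes $a_i^{1} = 1$ and $b_i^{0} = 1$ for every $i$. The first of these says $a_i = 1$, so each generator $a_i$ is killed; the second reads $b_i^{0} = 1$, i.e.\ the vacuous identity $1 = 1$, and therefore imposes no constraint on $b_i$. Finally, relation $(2)$, namely $\prod_{j=1}^g [a_j,b_j] = 1$, collapses to $\prod_{j=1}^g [1,b_j] = 1$ once all the $a_i$ are trivial, and so becomes vacuous as well.

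Concretely, I would run this through Tietze transformations. Starting from
$$\langle a_1,\dots,a_g,b_1,\dots,b_g \mid a_i^{p_i},\ b_i^{q_i}\ (1\le i\le g),\ \prod_{j=1}^g [a_j,b_j]\rangle,$$
substitute $p_i = 1$ and $q_i = 0$; then use each relator $a_i$ to delete the generator $a_i$ along with that relator; discard the trivial relators arising from $b_i^{0}$; and note that after the substitutions $a_i \mapsto 1$ the long relator $\prod_j [a_j,b_j]$ reduces to the empty word and may also be discarded. What is left is the presentation $\langle b_1,\dots,b_g \mid\ \rangle$, which is precisely the free group of rank $g$.

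The only point that warrants a moment's attention is the correct reading of the relation $b_i^{q_i} = 1$ when $q_i = 0$: this is the identity $1 = 1$, not $b_i = 1$, so the $b_i$ persist as free generators. Beyond that there is no real obstacle: the substance of the corollary is entirely contained in Lemma~\ref{pres}, whose proof in turn rests on the Seifert--van Kampen computation of $\pi_1\big(M_g(\overline p,\overline q)\setminus \nu T\big)$ together with the vanishing of $\pi_1\big(E(n)\setminus \nu T\big)$, which forces $c = d = 1$ after the symplectic sum. The present statement is simply the observation that the parameter choice $(\overline p,\overline q) = ((1,\dots,1),(0,\dots,0))$ makes the resulting finite presentation that of $F_g$.
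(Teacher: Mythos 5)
Your proposal is correct and matches the paper's intended argument: the Corollary is stated in the paper as an immediate specialization of the presentation in Lemma~\ref{pres}, and your careful reading of $b_i^{0}=1$ as a vacuous relation (so the $b_i$ survive as free generators while the $a_i$ and the commutator relator are killed) is exactly the point.
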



\subsection{Construction for an arbitrary finitely presentable group}

Let $G$ be a finitely presentable group with a given presentation
$\langle x_{1}, \dots, x_{k} \ | \ r_{1}, \dots, r_{s} \rangle.$ The
syllable length $l(w)$ of a word $w \in G$ in the letters $x_{1},
\dots, x_{k}$ is defined as

$$ l(w)= \{ \min m \;|\; w = x_{i_1}^{n_1} x_{i_2}^{n_2} \cdots
x_{i_m}^{n_m}, 1 \leq i_{j} \leq k, n_{j} \in \mathbb{Z} \}.$$

Let $\{a_j,b_j : 1 \leq j \leq k\}$ denote the set of standard
generators of $\pi_1(F)$, where $F$ is a closed genus $k$ surface.
Since $r_i$ is a word in the generators $x_1, \ldots, x_k$, there is
a smooth immersed oriented circle $\gamma_i$ on $F$ representing the
corresponding word in $\pi_1(F)$, obtained by replacing each $x_j$
with $b_j$. We may choose the loop $\gamma_i$ (up to homotopy) such
that at each self-intersection point, only two segments of
$\gamma_i$ intersect transversely.  In order to carry out some
Luttinger surgeries we have in mind, we first need to resolve the
self-intersection points of $\gamma_{i}$ by a trick that was
initially introduced in \cite{ABKP}, and refined further in
\cite{kor}. We will use the later version below.

For each self-intersection point of the immersed curve $\gamma_i$
where two segments intersect locally, we glue a $1$-handle to $F$
and modify $\gamma_i$ so that one of the intersecting segments goes
through the handle while the other remains under it. The modified
curve on the new surface will be denoted by $\gamma_i$ as well.
Notice that the number of handles needed to resolve all the
self-intersections of $\gamma_i$ is $l(r_i) - 1$. Thus, the total
number of handles attached will be $l - s$, where we just set $l =
l(r_1) + \cdots + l(r_s)$. After these handle additions,  the
surface $F$ is changed to a surface $F'$ of genus $k' = k + (l-s)
\geq k$ so that each (modified) $\gamma_i$ is now an embedded curve
on $F'$.


Let $\{a_i,b_i : 1 \leq j \leq k'\}$ represent the set of standard
generators of $\pi_1(F')$, extending the standard generators of
$\pi_1(F)$. We perform Luttinger surgeries on the standard
symplectic $4$-manifold $F' \times \mathbb{T}^2$, along the
following Lagrangian tori
$$\{(a_i' \times c', a_i', -1), \; (b_i' \times c'', b_i', -1), \ \
k+1\le i\le k'\}.$$

Moreover, just as in the free group case, we perform Luttinger
surgeries on $F' \times \mathbb{T}^2$  along the $2k$ Lagrangian
tori belonging to the set

$$\{(a_i' \times c', a_i', -1), \; (b_i' \times c'', b_i',
0) \; | \; 1 \leq i \leq k\}.$$

Let $M(G)$ denote the symplectic $4$-manifold obtained by the total
of $2k'$ Luttinger surgeries on $F' \times \mathbb{T}^2$. Notice
that $k$ of these Luttinger surgeries have a surgery coefficient
$0$, thus have no effect on the associated Lagrangian tori. As in
Section~\ref{fund}, we take a symplectic sum of $M(G)$ along the
torus $T$ descending from $pt \times \mathbb{T}^2$ with $E(n)$ along
a regular elliptic fiber (here we assume $n \geq 2$ for reasons
which will be clear in Section~\ref{ES}), and denote the resulting
symplectic $4$-manifold by $Y_n(G)$. Note that $\pi_1(Y_n(G))$ is a
free group of rank $k$.

Since a regular fiber of a genus $n-1$ hyperelliptic Lefschetz
fibration on $E(n)$ intersect a regular fiber of an elliptic
fibration on $E(n)$ at two points, $Y_n(G)$ admits a genus $2k' +
n-1$ Lefschetz fibration over $S^2$ (see the proof of
Lemma~\ref{lef}).

Note that our manifold $Y_n(G)$ can also be constructed as the twisted fiber sum
of two copies of a genus $2k' + n-1$ Lefschetz fibration on
$\Sigma_{k'}\times S^2\,\#4n\CPb$, where the later family of Lefschetz
obtained from the positive Dehn twist expressions of a certain involution
of the genus $2k'+n -1$ surface in the mapping class group (see \cite{Gu} and \cite{Yu}).
This essentially follows from the fact that the symplectic sum of $E(n)$ along a regular elliptic
fiber with $\Sigma_{k'} \times \mathbb{T}^2$ along a natural square
zero torus is diffeomorphic to the untwisted fiber sum of two copies
of the genus $2k' + n-1$ fibration on $\Sigma_{k'}\times
S^2\,\#4n\CPb$, which in turn follows from the branched cover
description of these $4$-manifolds. When performing the Luttinger
surgeries, the gluing diffeomorphism of the genus $2k' + n - 1$
fibration, which is an identity map initially, turns into the
product of a certain Dehn twists. This gluing $\phi$ diffeomorphism
can be described explicitly using the curves along which we perform
our Luttinger surgeries: $\phi = t_{a_{1}} \cdots t_{a_{k}}
t_{a_{k+1}} t_{b_{k+1}} \cdots t_{a_{k'}}t_{b_{k'}}$.

The global monodromy of the genus $2k' + n -1$ Lefschetz fibration
on $Y_{n}(G)$ is given by the following word: $\theta^{2} \phi^{-1}
\theta^2 \phi = 1$, where $\theta^{2}$ is the global monodromy of
Gurtas' fibration \cite{Gu, Yu}, obtained by factorizing an
involution in the mapping class group of a closed surface of genus
$2k'+n-1$ in terms of positive Dehn twists. The aforementioned
involution $\theta$ is obtained by gluing the hyperelliptic
involution of the genus $n-1$ surface with the vertical involution
of the genus $2k'$ surface with two fixed points \cite{Gu}. 



In addition to applying the above Luttinger surgeries on the
``standard" tori, we apply $s$ more surgeries along tori:

$$\{ (\gamma_i' \times c''', \gamma_i', -1), \; 1 \leq i \leq s \}.$$

\noindent These tori descend from $M(G)$ and survive in $Y_n(G)$
after the fiber sum with $E(n)$. Let $X_n(G)$ denote the symplectic
$4$-manifold obtained after performing these Luttinger surgeries in
$Y_n(G)$.


In the fundamental group of $X_n(G)$ we have the following
relations---which we  explain below---that come from the last set of
Luttinger surgeries, $$[e_{k_1}^{-1},d]=\gamma_1, \cdots,
[e_{k_s}^{-1},d]=\gamma_s $$ where $e_{k_i}\times d$ is a dual torus
of $\gamma_i'\times c'''$. Here each $e_{k_i}$ is a carefully chosen
disjoint vanishing cycle of Gurtas' genus $2k' + n - 1$ fibration in
\cite{Gu} (see also \cite{Yu}), coming from the hyperelliptic part
of the involution $\theta$, and each $\gamma_i'$ is modified so that
it intersects the vanishing cycles $e_{k_i}$ in a single point.
Since after fiber summing with $E(n)$, we have $c=d=1$, it follows
that $\pi_1(X_{n}(G))$ admits a presentation with generators $\{a_i,
b_i : 1 \leq i \leq k' \}$ and relations:
\begin{gather}
a_1 = 1, 
\ldots, 
a_{k'}= 1, \nonumber \\
b_{k+1} = 1, 
\ldots, 
b_{k'}= 1, \nonumber \\
\gamma_1 = 1, 
\ldots, 
\gamma_{s} = 1. \nonumber \\
\nonumber
\end{gather}

In other words, $\pi_1(X_{n}(G)) =  \langle b_{1},  \dots, b_{k}  \;
| \; \gamma_{1},  \dots,  \gamma_{s} \rangle$, which is indeed
isomorphic to the given group $G$. The above presentation follows
from the following facts: (i) $c = d = 1$ in $\pi_{1}(X(G))$, (ii)
for each torus $T_{i} = \gamma_i' \times c'''$ there is at least one
vanishing cycle $e_{k_i}$ of the genus $2k' + n - 1$ Lefschetz
fibration on $Y_n(G)$ (see Theorem 2.0.1, page 12, \cite{Gu}) such
that $\gamma_i'$ intersects $e_{k_i}$  precisely at one point, and
$\gamma_i'$ does not intersect with $e_{k_j}$ for any $j \neq i$.

After the $s$ Luttinger surgeries on tori $T_{i} = \gamma_i' \times c'''$,
 we obtain the following set of relations: $$[e_{k_1}^{-1},d]=\gamma_1, \cdots,
[e_{k_s}^{-1},d]=\gamma_s.$$ \noindent Since $e_{k_i} = 1$, and
$d=1$ in the fundamental groups of $Y_n(G)$ and $X_n(G)$, because
$e_{k_i}$ are the vanishing cycles of the genus $2k' + n - 1$
fibrations on them, we obtain $\gamma_i = 1$ for any $i$. We can
easily write down the global monodromy of the genus $2k' + n -1$
Lefschetz fibration on $X_{n}(G)$: $\theta^{2} {\phi'}^{-1} \theta^2
\phi' = 1$, where  $\phi' = t_{a_{1}} \cdots t_{a_{k}} t_{a_{k+1}}
t_{b_{k+1}} \cdots t_{a_{k'}}t_{b_{k'}}t_{\gamma_1'}\cdots
t_{\gamma_s'}$. In conclusion, by taking $g=k'$, we proved




\begin{Prop} \label{sect} Given any finitely presentable group $G$, there exists
a closed symplectic $4$-manifold $X_n(G)$ with fundamental group
$G$, which admits a genus $2g+n-1$ Lefschetz fibration over $S^2$
that has at least $4n + 4$ pairwise disjoint sphere sections of self
intersection $-2$. Moreover, $X_n(G)$ contains a homologically
essential embedded torus of square zero disjoint from these sections
which intersects each fiber of the Lefschetz fibration twice.
\end{Prop}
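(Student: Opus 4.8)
The plan is to assemble Proposition~\ref{sect} directly from the three lemmas already proved in this section, viewing $X_n(G)$ as the result of performing the extra $s$ Luttinger surgeries of the previous subsection on the manifold $Y_n(G)$, which itself is the symplectic sum of $M(G)$ with $E(n)$ exactly as in the free-group construction. First I would observe that, since $Y_n(G)$ plays for the general case the role that $X_{g,n}(\overline p,\overline q)$ (with $g=k'$, $p_i=q_i$ suitably $0$ or $1$) plays for the free-group case, Lemma~\ref{lef} applies verbatim: $Y_n(G)$ carries a genus $2k'+n-1$ Lefschetz fibration over $S^2$ with at least $4n+4$ pairwise disjoint sphere sections of self-intersection $-2$, together with a homologically essential square-zero torus disjoint from those sections and meeting each fiber twice. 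Setting $g=k'$ gives the fiber genus $2g+n-1$ claimed in the statement.

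Next I would check that the last $s$ Luttinger surgeries — those along $\{(\gamma_i'\times c''',\gamma_i',-1)\}$ — are compatible with all of this structure. The key point, already recorded above via Gurtas' fibration, is that each $\gamma_i'$ can be arranged to lie on a fiber and to meet the vanishing cycle $e_{k_i}$ transversely in one point and be disjoint from all $e_{k_j}$, $j\neq i$; hence each torus $\gamma_i'\times c'''$ sits inside a fibered neighborhood and the Luttinger surgery along it amends the global monodromy only by inserting a Dehn twist $t_{\gamma_i'}$, producing the new monodromy factorization $\theta^2{\phi'}^{-1}\theta^2\phi'=1$ with $\phi'=t_{a_1}\cdots t_{a_k}t_{a_{k+1}}t_{b_{k+1}}\cdots t_{a_{k'}}t_{b_{k'}}t_{\gamma_1'}\cdots t_{\gamma_s'}$. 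In particular the Lefschetz fibration structure, its fiber genus, and the $-2$-sphere sections all survive into $X_n(G)$, since the sections come from the $E(n)$ side and the surgery tori are disjoint from them; and the distinguished square-zero torus $T$, being disjoint from both the $\gamma_i'\times c'''$ and the sections, persists and still meets each fiber twice. Its homological essentiality can be seen as in \cite{AP} and Lemma~\ref{lef} (e.g. it has algebraic intersection two with the fiber class, hence is nonzero in $H_2$).

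Finally I would invoke the fundamental-group computation carried out above: combining $c=d=1$ (forced by $\pi_1(E(n)\setminus\nu T)=1$ after the fiber sum) with the relations $a_i=1$, $b_i=1$ for $i>k$, and $[e_{k_i}^{-1},d]=\gamma_i$ with $e_{k_i}=1$, one gets the presentation $\pi_1(X_n(G))=\langle b_1,\dots,b_k\mid\gamma_1,\dots,\gamma_s\rangle\cong G$, exactly the argument of Lemma~\ref{pres} enriched by the extra surgeries. Symplecticity of $X_n(G)$ is automatic because Luttinger surgery and symplectic sum are both symplectic operations (the Remark after the definition of Luttinger surgery, and \cite{Gom}). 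I expect the only real subtlety — and the step I would spell out most carefully — to be the placement of the curves $\gamma_i'$: one must simultaneously realize each $\gamma_i'$ as an embedded loop on $F'$ (after the handle stabilizations that raise the genus from $k$ to $k'$), ensure it is a single vanishing cycle-type curve meeting the chosen Gurtas vanishing cycle $e_{k_i}$ once and the others not at all, and keep it off the $-2$-sphere sections and the torus $T$. Everything else is bookkeeping: Lemma~\ref{lef} gives the fibration and sections, the monodromy rewriting gives compatibility, and Lemma~\ref{pres}-style van Kampen arguments give $\pi_1=G$.
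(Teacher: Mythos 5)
Your proposal is correct and follows essentially the same route as the paper: apply the free-group construction (Lemma~\ref{lef}) to $Y_n(G)$, observe that the $s$ extra Luttinger surgeries along $\gamma_i'\times c'''$ only insert the twists $t_{\gamma_i'}$ into the monodromy while staying disjoint from the $(-2)$-sphere sections and the torus $T$, and compute $\pi_1(X_n(G))\cong G$ from $c=d=1$ and the relations $[e_{k_i}^{-1},d]=\gamma_i$ with $e_{k_i}=1$. The subtlety you flag about arranging the $\gamma_i'$ to meet only the chosen Gurtas vanishing cycles is exactly the point the paper handles via \cite{Gu}, so nothing essential is missing.
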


Note the genus $2g+n-1=2(k + l - s) + n-1$ of the Lefschetz
fibration on $X_n(G)$ given in Proposition~\ref{sect} depends on the
given presentation  $\langle x_{1}, \dots, x_{k} \ | \ r_{1}, \dots,
r_{s} \rangle$ of the finitely presentable group $G$. \\

 Now we are
ready to give a proof of Theorem~\ref{main2}:

\begin{proof} (\emph{of Theorem~\ref{main2}}):
For any integer $h \geq 2$, let $\mathcal{F}_h=\{K_{i} : i \in
\mathbb{N} \}$ denote an infinite family of genus $h$ fibered knots
in $S^3$ with pairwise distinct Alexander polynomials. Such families
of knots exist by the work of T. Kanenobu \cite{kan}. First, notice
that by Seifert-Van Kampen's theorem, $\pi_1 (X_n(G)_{K_i})= G$,
since all the loops on the torus $T$ given above are null-homotopic
in $X_n(G)$, and the homology class of a longitude of the knot $K_i$
in $S^3 \setminus \nu(K_i)$ is trivial.

Next, we show that ${X_n(G)}_{K_i}$ is homeomorphic to $X_n(G)$ for
any $K_{i} \in \mathcal{F}_h$. Recall that $E(n)$ contains a {\em
small} simply connected submanifold $N(n)$ with $b_{2} = 2$, called
a nucleus  \cite{Gom1}, whose complement is diffeomorphic to the
Milnor fiber of the  Brieskorn homology $3$-sphere $\Sigma(2, 3,
6n-1)$. Consequently, we see that $X_n(G)$ contains a copy of $N(n)$
and thus ${X_n(G)}_{K_i}$ contains a copy of an exotic nucleus
$N(n)_{K_{i}}$. Therefore we obtain the following decompositions:
$X_n(G) = N(n) \cup W(G, n, g)$ and ${X_n(G)}_{K_i} = N(n)_{K_{i}}
\cup W(G, n, g)$. Since the boundary of $N(n)_{K_i}$ is the
Brieskorn homology $3$-sphere, the argument which was elaborated in
details in \cite[page 12]{ako} shows that ${X_n(G)}_{K_i}$ is
homeomorphic to  $X_n(G)$ for any choice of $K_i$.

Now using the well-known knot surgery formula for Seiberg-Witten
invariants \cite{fs}, we see that the infinite family $\{
X_n(G)_{K_{i}}  : K_{i} \in \mathcal{F}_h\}$ consists of closed
symplectic $4$-manifolds which are pairwise non-diffeomorphic.


Observe that $X_n(G)_{K_i}$ admits a genus $2g + 2h + n -1$
Lefschetz fibration over $S^2$, which is induced from the genus
$2g+n-1$ Lefschetz fibration on $X_n(G)$. To complete the proof of
our theorem we need to show that the family $\{X_n(G)_{K_i}:K_{i}
\in \mathcal{F}_h\}$ can be chosen to consist of only non-complex
manifolds.

Since $X_n(G)$ obtained by a sequence of Luttinger surgeries from
the Kahler surface $E(n,g) := E(n) \#_{id} \Sigma_{g} \times
\mathbb{T}^2$ (where $n \geq 2$ and $g \geq 1$), and the Luttinger
surgery preserves the symplectic Kodaira dimension $\kappa^{s}$
\cite{HoLi}, we conclude that $\kappa^{s}(X_n(G))= \kappa^{s}
(E(n,g))$. Moreover, from the formula for Kodaira dimension under
the symplectic fiber sum it follows that $\kappa^{s} (E(n,g))=1$
(cf. \cite[page 350]{DZ}). In order to complete the proof, we use
the following facts: \\

(a) The complex Kodaira dimension $\kappa^{h}$ is equal to the
symplectic Kodaira dimension $\kappa^{s}$ for a smooth $4$-manifold
which admits a symplectic structure as well as a complex structure,
where these structures are not necessarily required to be compatible
\cite[Theorem 1.1]{DZ}.

(b) A complex surface with Kodaira dimension $1$ is properly
elliptic.

(c) The diffeomorphism type of an elliptic surface $S$ with $\chi(S)
> 0$ and $|\pi_{1}(S)|=\infty$  is determined by its
fundamental group (cf. \cite[Theorem 8.3.12]{gs}, and \cite{U}). \\

\emph{Case 1:} Suppose that $X_n(G)$ is a complex surface. By (b)
above, it follows that $X_n(G)$ is properly elliptic.
 Since the symplectic 4-manifolds $X_n(G)$ and $\{X_n(G)_{K_i} : K_i
\in \mathcal{F}_h\}$ have $b_2^+ \geq 2$ and $\chi \ne 0$, none of
the members of this family is a properly elliptic surface without
singular fibers.

If the fundamental group of $X_n(G)$ is infinite, then by comparing
the Seiberg-Witten invariants of $X_n(G)$ and $\{X_n(G)_{K_i} : K_i
\in \mathcal{F}_h\}$, which are all distinct, and using the fact
(c), we conclude that $\{X_n(G)_{K_i} : K_i  \in \mathcal{F}_h\}$
are not complex surfaces. Now suppose that the fundamental group of
$X_n(G)$ is finite, but not cyclic. According to the work of M. Ue
\cite{U} (see also \cite[Remark 8.3.13]{gs}), such an elliptic
surface does not admit any exotic smooth structure. This implies
that none of the smooth $4$-manifolds in $\{X_n(G)_{K_i} : K_i  \in
\mathcal{F}_h\}$ admits complex structures, since they are all
pairwise non-diffeomorphic and are exotic copies of $X_n(G)$.

Finally, assume that the fundamental group of $X_n(G)$ is finite
cyclic. Note that elliptic surfaces with finite cyclic fundamental
group consist of the following family: $E(n)_{p,q}$ ($1 \leq p \leq
q$), for which $\pi_{1} \cong \mathbb{Z}_{gcd(p,q)}$. We refer the
reader to \cite[Theorem 3.3.6]{gs} for the computation of the
Seiberg-Witten invariants of $E(n)_{p,q}$ (for complete details, see
original paper \cite{FS1}). Now, by carefully choosing the infinite
family  $\mathcal{F}_h=\{K_{i} : i \in \mathbb{N} \}$ of genus $h$
fibered knots in $S^3$ with pairwise distinct Alexander polynomials,
we can guarantee that the Seiberg-Witten invariants of each
$X_n(G)_{K_i}$ is different from the Seiberg-Witten invariants of
any $E(n)_{p,q}$. Thus, we conclude that members of  $\{X_n(G)_{K_i}
: K_i
\in \mathcal{F}_h\}$ are not complex. \\

\emph{Case 2:} Suppose that $X_n(G)$ is not a complex surface, but
there is at least one complex surface, say $X_n(G)_{K}$, which
belongs to the family $\{X_n(G)_{K_i} :  K_{i} \in \mathcal{F}_h\}$.
Then we can apply the above argument to $X_n(G)_{K}$, to obtain the
infinite family of non-complex symplectic 4-manifolds $\{ X_n(G)_{K,
K_{i}} : K_{i} \in \mathcal{F}_h\}$ via knot surgery on
$X_n(G)_{K}$. Now using the identification $X_n(G)_{K, K_{i}} =
X_n(G)_{K\# K_{i}}$ (see \cite{akah}), we finish the proof of the
theorem using the family of fibered knots $\{K\# K_{i}  :  K_{i} \in
\mathcal{F}_h\}$.

\end{proof}

\section{Exotic Stein fillings with arbitrary fundamental group}\label{ES}

In this final section we prove Theorem~\ref{main1}:

\begin{proof}(\emph{of Theorem~\ref{main1}}): We follow a strategy similar to the one we used in \cite{ako}
to prove our theorem. By Proposition~\ref{sect}, there is a closed
symplectic $4$-manifold $X_n(G)$ whose fundamental group is $G$,
which admits a Lefschetz fibration over $S^2$ that has  at least $4n
+ 4$ pairwise disjoint sphere sections of square $-2$. By removing a
neighborhood of all but one of these sections and a neighborhood of
a regular fiber we obtain a PALF (positive allowable Lefschetz
fibration) over $D^2$ which is a Stein filling of the contact
structure induced on its boundary (cf. \cite{ao1}). As shown in
\cite{ako}, the boundary $3$-manifold is a Seifert fibered
\emph{singularity link} and the induced contact structure is the
\emph{canonical} contact structure on this singularity link.

To produce an infinite family of exotic Stein fillings of the same
Seifert fibered singularity link with its canonical contact
structure, we use the family of Lefschetz fibrations obtained in the
proof of  Theorem~\ref{main2} via knot surgery. Similar approaches
were used to study the special cases in \cite{aems,ako}. The torus
we use for knot surgery is a torus that is induced from $\{pt\}
\times \mathbb{T}^2$ that descends to the surface bundle $M(G)$ as a
section and to the Lefschetz fibration $X_n(G) \to S^2$ as a
multi-section intersection each fiber twice.

We take an infinite family $\{ K_{i} : i \in \mathbb{N}\}$ of
fibered knots of some fixed genus $h \geq 2$ with pairwise distinct
Alexander polynomials and apply knot surgery to $X_n(G)$ along the
aforementioned torus, to produce an infinite family of pairwise
non-diffeomorphic closed $4$-manifolds $\{X_n(G)_{K_i} : i  \in
\mathbb{N}\}$ all homeomorphic to $X_n(G)$ as in the proof of
Theorem~\ref{main2}.

Moreover, $X_n(G)_{K_i}$ admits a genus $2g + 2h + n -1$ Lefschetz
fibration over $S^2$ induced from the genus $2g+n-1$ Lefschetz
fibration on $X_n(G)$, where the disjoint sphere sections of $X_n(G)
\to S^2$ extends as sections after the knot surgery.

As a consequence, by removing the neighborhood of some fixed number
of sections and a regular fiber of the Lefschetz fibration
$X_n(G)_{K_i} \to S^2$, we obtain an infinite family of Stein
fillings of the canonical contact structure on the boundary Seifert
fibered singularity link. Using the fact that any diffeomorphism of
the boundary neighborhood of a $-2$ sphere section and a regular
genus $2g+2k+n-1$ fiber extends and the Seiberg-Witten invariants of
${X_n(G)}_{K_i}$ are distinct, we see that our infinite family of
Stein fillings are not diffeomorphic.

Using the exact same argument as in the proof of \cite[Theorem
6]{ako} and Seifert-Van Kampen's Theorem, we deduce that all our
fillings have the same fundamental group $G$. The key point is that
the normal circles resulting from the removal of the $-2$ sphere
sections and the genus $2g+2h+n-1$ fiber are all nullhomotopic (see
\cite[page 8]{ako} for the details).

We claim that the Stein fillings that we constructed above are all homeomorphic---which
finishes the proof of our theorem. First, recall from Theorem 13 that
${X_n(G)}_{K_i}$ is homeomorphic to  $X_n(G)$ for any choice of $K_i$.


The knot surgery operation mostly affects the complement of the
removed neighborhoods of the regular genus $2g+n-1$ fiber and
$(-2)$-sphere sections. Let us make this more precise. We first
observe that in $X_n(G)$ the tubular neighborhoods of $(-2)$-sphere
sections are all disjoint from the cusp neighborhood of the torus
$T$ given above. Moreover, the tubular neighborhood of a regular
fiber intersects the cusp neighborhood along two disjoint copies of
$D^2 \times D^2$. Next, using the fact that our homeomorphism is
identity on the complement of the cusp neighborhood (see \cite[page
12]{ako} for the details), we can remove our configurations
entirely, except the two disjoint copies of $D^2 \times D^2$, by not
affecting the homeomorphism. Performing knot surgery operation on
$T$ changes these two disk bundles to $D^2 \times \Sigma(h,1)$,
where $\Sigma(h,1)$ denotes genus $h$ surface with one puncture.
Since any diffeomorphism of $\partial(D^2 \times \Sigma(h,1))$
extends, we can delete these two $D^2 \times \Sigma(h,1)$ as well
without affecting our homeomorphism.

\end{proof}


\v  \noindent {\bf {Acknowledgement}}:  The work on this paper
started at the  FRG workshop ``Topology and Invariants of Smooth
4-manifolds" in Miami, USA and was mostly completed at the  FRG
workshop ``Holomorphic Curves and Low Dimensional Topology" held in
the Stanford University. The authors are very grateful to the
organizers of both workshops for creating a very stimulating
environment.  A. A. was partially supported by NSF grants
FRG-0244663 and DMS-1005741. B.O. was partially supported by the
Marie Curie International Outgoing Fellowship 236639.

\end{document}